\newtheorem{theo}{Theorem}[section] 
\newtheorem{prop}[theo]{Proposition}
\newtheorem{example}[theo]{Example}
\theoremstyle{definition}
\newtheorem{defi}[theo]{Definition}
\newcommand{\C}{\mathbb C}
\newcommand{\partx}{\partial/\partial x}
\begin{document}
  
\begin{abstract}
We provide examples of quasi-isometries for strongly convex domains in $\mathbb C^n$ endowed with their Kobayashi distance.
\end{abstract} 

\title[Quasi-isometries in strongly convex domains]
{Quasi-isometries in strongly convex domains} 

\author{Florian Bertrand and Herv\'e Gaussier}

\subjclass[2010]{32H02, 32Q45, 32Q60}
\keywords{Almost complex manifold, Kobayashi metric, Quasi-isometries}
\thanks{Research of the first author was supported by FWF grant M1461-N25.}
\maketitle 

\section*{Introduction}

Isometries between metric spaces are rigid objects that encode the underlying geometry of the metrics. For instance it can be proved that isometries between strongly convex domains in $\mathbb C^n$, endowed with their Kobayashi distance, are either holomorphic or antiholomorphic \cite{ga-se}; that structure rigidity should be satisfied by any isometry between Kobayashi hyperbolic manifolds. From a metric point of view, where it is necessary to construct flexible objects, it is more natural to deal with quasi-isometries. One can prove (see Proposition~\ref{qconf-prop}) that for $0 \leq k < 1$ every $k$-quasiconformal homeomorphism of the unit disk in $\mathbb C$ is a quasi-isometry for the Poincar\'e distance and that this result is optimal (see Example~\ref{counter-ex}). We generalize that result in higher dimension, providing examples of quasi-isometries for strongly convex domains in $\mathbb C^n$. Our main result (Theorem \ref{quasi-thm}) states that a smooth diffeomorphism between strongly convex domains, satisfying a generalized pointwise quasiconformal inequality, is a quasi-isometry for the Kobayashi metric. As examples of such maps one can quote all sufficiently small smooth deformations of biholomorphisms between strongly convex domains.

 \section{Preliminaries}

An {\it almost complex structure} $J$ on a real smooth manifold $M$ is a $\left(1,1\right)$ tensor field
 which  satisfies $J^{2}=-Id$. We suppose that $J$ is smooth.
The pair $\left(M,J\right)$ is called an {\it almost complex manifold}. We denote by $J_{st}$
the standard integrable structure on $\C^{n}$ for every $n$.
A differentiable map $f:\left(M',J'\right) \longrightarrow \left(M,J\right)$ between two almost complex manifolds is said to be 
 {\it $\left(J',J\right)$-holomorphic}  if $J\left(f\left(p\right)\right)\circ d_{p}f=d_{p}f\circ J'\left(p\right),$ 
for every $p \in M'$. In case  $M'=\Delta$ is the unit disc in $\C$, such a map is called a {\it pseudoholomorphic disc}.  

The existence of local pseudoholomorphic discs proved in \cite{ni-wo} 
enables to define the {\it Kobayashi pseudometric} $K_{\left(M,J\right)}$ for $p\in M$ and $v \in T_pM$:
$$
K_{\left(M,J\right)}\left(p,v\right):=\inf 
\left\{\frac{1}{r}>0, u: \Delta \rightarrow \left(M,J\right) 
\mbox{  J-holomorphic }, u\left(0\right)=p, d_{0}u\left(\partx\right)=rv\right\},
$$ 
and its integrated pseudodistance  $d_{\left(M,J\right)}$:

$$
d_{\left(M,J\right)}\left(p,q\right): =\inf\left\{l_K(\gamma), \mbox{ }
\gamma: [0,1]\rightarrow M, \mbox{ }\gamma\left(0\right)=p, \gamma\left(1\right)=q\right\},
$$

for $p,q \in M$, 
where  $l_K(\gamma)$ is  the {\it Kobayashi length} of a $\mathcal C^1$-piecewise smooth curve $\gamma$ defined by 
$\displaystyle l_K(\gamma):= \int_0^1 K_{\left(M,J\right)}\left(\gamma\left(t\right),\gamma'\left(t\right)\right)dt$.
The manifold $\left(M,J\right)$ is {\it Kobayashi hyperbolic} if $d_{\left(M,J\right)}$ is a distance.


\vskip 0,2cm
The main object of our study will be the canonical morphisms of  Gromov hyperbolic spaces.

\begin{defi} Let $f: (X,d) \rightarrow (X',d')$ be a map between two metric spaces. We say that 
$f$ is a quasi-isometry if there exist two positive constants  $\lambda$ and  $c$ such that for every $x,y \in X$:
\begin{equation*}
\frac{1}{\lambda}d(x,y)-c \leq d'(f(x),f(y)) \leq \lambda d(x,y)+c.
\end{equation*}
\end{defi}

\section{Quasi-isometries for strongly convex domains}

The first result concerns quasiconformal maps. This motivates the study of canonical morphisms of Gromov hyperbolic spaces and was an inspiration to study metric properties of some diffeomorphisms between strongly convex domains.
 
Let $\Omega$ be a domain in $\C$ and let $k\geq 0$. A  map $f: \Omega \rightarrow \C$ of class $\mathcal C^1$ is {\it $k$-quasiconformal}  if  for all $z \in \Omega$, $\left |\frac{\partial f}{\partial \overline{\zeta}}(z)\right |\leq 
k\left |\frac{\partial f}{\partial \zeta}(z)\right |$. Although conformal maps  of the unit disc are isometries of the Poincar\'e distance, quasiconformal maps are not necessarily 
quasi-isometries.
Indeed, the inequality
$$\frac{1}{\lambda}d_{\Delta}(\zeta,\zeta')-c \leq d_{\Delta}(f(\zeta),f(\zeta'))$$
may fail as it can be seen by considering non injective maps such as $f(\zeta)=\zeta^2$.
However we have the following proposition that may be attributed to P.Kiernan. Proposition A will be crucial in the proof our main result.
 \vskip 0,2cm
\noindent{\bf Proposition A.} {\sl Let $f: \Delta \rightarrow \Delta$ be a $k$-quasiconformal map with $k<1$. Then there is $C_k > 0$ such that:
\begin{equation}\label{eqqc}
\forall \zeta \in \Delta,\ d_{\Delta}(f(\zeta),f(\zeta'))\leq C_k (d_{\Delta}(\zeta,\zeta') + 1).
\end{equation}
}

\begin{proof}
Since $k < 1$ then according to P.Kiernan \cite{ki} there exists a constant $C_k$ such that 
\begin{equation}\label{eqqc1}
\left\{
\begin{array}{lll} 
d_{\Delta}(f(\zeta),f(\zeta'))\leq C_k d_{\Delta}(\zeta,\zeta')^{\frac{1-k}{1+k}}  & \mbox{ if }  ~ d_{\Delta}(\zeta,\zeta') \leq (\frac{1}{32})^{\frac{1+k}{1-k}}\\
\\
d_{\Delta}(f(\zeta),f(\zeta'))\leq C_k d_{\Delta}(\zeta,\zeta')  & \mbox{ if }  ~  d_{\Delta}(\zeta,\zeta') > (\frac{1}{32})^{\frac{1+k}{1-k}}.
\end{array}
\right.
\end{equation}

Then Inequality (\ref{eqqc}) is a direct consequence of (\ref{eqqc1}).
\end{proof}
As a direct application of proposition A we may consider quasiconformal homeomorphisms of the unit disc.
\begin{prop}\label{qconf-prop}
For $k<1$, $k$-quasiconformal homeomorphisms of the unit disc $\Delta$ are quasi-isometries for the Poincar\'e distance $d_{\Delta}$ on $\Delta$.
\end{prop}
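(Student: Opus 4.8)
The plan is to obtain the two inequalities in the definition of a quasi-isometry separately, each from one application of Proposition A: the upper estimate by applying it to $f$ itself, and the lower estimate by applying it to the inverse homeomorphism $f^{-1}$.

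First I would record the upper estimate. Since $f:\Delta\to\Delta$ is a $k$-quasiconformal map with $k<1$, Proposition A furnishes a constant $C_k>0$ such that
$$d_{\Delta}(f(\zeta),f(\zeta'))\leq C_k\big(d_{\Delta}(\zeta,\zeta')+1\big)\qquad\text{for all }\zeta,\zeta'\in\Delta,$$
which is exactly the right-hand inequality $d_{\Delta}(f(\zeta),f(\zeta'))\leq\lambda d_{\Delta}(\zeta,\zeta')+c$ for $\lambda=c=C_k$. For the lower estimate I would use that $f$ is a homeomorphism of $\Delta$, so that $g:=f^{-1}:\Delta\to\Delta$ is well defined, together with the fact that $g$ is again $k$-quasiconformal with the \emph{same} constant $k$. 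Applying Proposition A to $g$ then gives, for all $w,w'\in\Delta$,
$$d_{\Delta}(g(w),g(w'))\leq C_k\big(d_{\Delta}(w,w')+1\big),$$
and substituting $w=f(\zeta)$, $w'=f(\zeta')$ yields $d_{\Delta}(\zeta,\zeta')\leq C_k\big(d_{\Delta}(f(\zeta),f(\zeta'))+1\big)$, that is
$$\tfrac{1}{C_k}\,d_{\Delta}(\zeta,\zeta')-1\leq d_{\Delta}(f(\zeta),f(\zeta')).$$
Choosing a single constant, say $\lambda=c=\max(C_k,1)$, both inequalities hold at once and the proposition follows.

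The only step that is not a direct substitution is the claim that $f^{-1}$ is $k$-quasiconformal with the same $k$, and this is where I expect the mild subtlety to lie. When $f$ is a $C^1$ diffeomorphism it is immediate from differentiating $f\circ f^{-1}=\mathrm{id}$, since the pointwise maximal dilatation $\big(|\partial_\zeta h|+|\partial_{\bar\zeta}h|\big)\big/\big(|\partial_\zeta h|-|\partial_{\bar\zeta}h|\big)$ of a map $h$ is invariant under inversion; more generally it is the classical fact that the inverse of a $K$-quasiconformal homeomorphism is $K$-quasiconformal, so that Proposition A (which rests on Kiernan's theorem, valid in that generality) applies to $f^{-1}$ as well. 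Everything else in the argument is the elementary rearrangement carried out above.
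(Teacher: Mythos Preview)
Your argument is correct and follows exactly the paper's own approach: apply Proposition~A to $f$ for the upper bound, observe that $f^{-1}$ is again $k$-quasiconformal, and apply Proposition~A to $f^{-1}$ for the lower bound. Your additional remarks justifying that $f^{-1}$ has the same quasiconformality constant simply flesh out what the paper asserts in one line.
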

\begin{proof}

Let  $f$ be such  a $k$-quasiconformal map. Then $f^{-1}$ is also a $k$-quasiconformal map and we may apply Inequality (\ref{eqqc}) to both $f$ and $f^{-1}$ to conclude the proof of Proposition~\ref{qconf-prop}.

\end{proof}

The next example  shows that the condition $k<1$ is optimal in Proposition~\ref{qconf-prop}.

\begin{example}\label{counter-ex}
The map
$$
\begin{array}{lllll}
f &: & \Delta & \rightarrow & \Delta\\
  &   & \zeta  & \mapsto     & \zeta exp\left(\frac{i}{1-|\zeta|}\right)
\end{array}
$$
satisfies the following conditions:

\vspace{0.1cm}

$(i)$ $f$ is a homeomorphism from $\Delta$ to $\Delta$,

\vskip 0,1cm
$(ii)$ $\forall \zeta \in \Delta \backslash\{0\},\
\left |\frac{\partial f}{\partial \overline{\zeta}}(\zeta)\right |<\left |\frac{\partial f}{\partial \zeta}(\zeta)\right |,$
\vskip 0,1cm
$(iii)$ $f$ is not a quasi-isometry of $(\Delta,d_\Delta)$.
\end{example}

\begin{proof}

\noindent $\bullet$ Point $(i)$ is direct.

\vskip 0,2cm
\noindent $\bullet$ Point $(ii)$. For every $\zeta \in \Delta \backslash\{0\}$ we have:
$$
\frac{\partial f}{\partial \overline{\zeta}}(\zeta) = \frac{i}{2}\frac{\zeta^2}{|\zeta|(1-|\zeta|)^2} exp\left(\frac{i}{1-|\zeta|}\right)
$$
and
$$
\frac{\partial f}{\partial \zeta}(\zeta)= \left(\frac{i}{2}\frac{\zeta \overline{\zeta}}{|\zeta|(1-|\zeta|)^2} + 1\right) exp\left(\frac{i}{1-|\zeta|}\right).
$$
This implies Point $(ii)$.

\vskip 0,1cm
\noindent $\bullet$ Point $(iii)$. 
Since 
$$
\left\{
\begin{array}{lll}
f\left(1-\frac{1}{n}\right)&=&\left(1-\frac{1}{n}\right) exp\left(in\right)\\
\\
f\left(1-\frac{1}{n+\pi}\right)&=&-\left(1-\frac{1}{n+\pi}\right) exp\left(in\right)\\
\end{array}
\right.
$$
then 
$$\lim_{n \rightarrow \infty}d_\Delta\left(f\left(1-\frac{1}{n}\right),f(1-\frac{1}{n+\pi})\right)=+\infty.$$
However, there exists $c>0$ such that:
$$
d_\Delta\left(1-\frac{1}{n},1-\frac{1}{n+\pi}\right) = \log \left(\frac{2n+2\pi-1}{2n-1}\right) \leq c.
$$
This proves Point $(iii)$.
\end{proof}

\vskip 0,3cm

The main result of this note is the following:

\begin{theo}\label{quasi-thm}
Let $D$ and $D'$ be two smooth strongly convex bounded domains in $\mathbb C^n$ and let $F$ be a smooth diffeomorphism between 
$\overline{D}$ and $\overline{D'}$.
We assume that there is a sufficiently small positive constant $c$ such that:
\begin{equation}\label{eqqcF}
\forall z \in D,\ \forall v \in \C^n,\ |\bar \partial F(z)\bar v | \leq c |\partial F(z)v |.
\end{equation}
Then $F$ is a quasi-isometry between $(D,d_{(D,J_{st})})$ and $(D',d_{(D',J_{st})})$.
\end{theo}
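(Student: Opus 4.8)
The plan is to exploit the infinitesimal characterization of the Kobayashi distance together with Proposition~A applied fiberwise along pseudoholomorphic discs. The key observation is that condition \eqref{eqqcF} says that the pullback structure $F^*J_{st}$ is a small perturbation of $J_{st}$; equivalently, writing $\tilde J := (dF)^{-1} \circ J_{st} \circ dF$, the bound \eqref{eqqcF} forces $\|\tilde J - J_{st}\|$ to be controlled by $c$ uniformly on $\overline D$. The first step is therefore to make this quantitative: show that \eqref{eqqcF} implies a pointwise comparison of the form
\begin{equation*}
\frac{1}{1 + \epsilon(c)}\, K_{(D, J_{st})}(z, v) \leq K_{(D, \tilde J)}(z, v) \leq (1 + \epsilon(c))\, K_{(D, J_{st})}(z, v),
\end{equation*}
with $\epsilon(c) \to 0$ as $c \to 0$, and where $K_{(D,\tilde J)}$ is the Kobayashi metric of the almost complex structure $\tilde J$. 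Since $F: (D, \tilde J) \to (D', J_{st})$ is by construction $(\tilde J, J_{st})$-biholomorphic, it is an isometry for the respective integrated Kobayashi distances, so $d_{(D', J_{st})}(F(z), F(w)) = d_{(D, \tilde J)}(z, w)$, and the whole problem reduces to comparing $d_{(D, \tilde J)}$ with $d_{(D, J_{st})}$ on the fixed domain $D$.

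**Where the genuine work lies.** The multiplicative comparison of metrics above only gives the multiplicative part of the quasi-isometry inequality; it does not by itself produce the additive constant $c$ in the definition, nor is a naive infinitesimal comparison of $\tilde J$-holomorphic and $J_{st}$-holomorphic discs immediate, because a $\tilde J$-holomorphic disc need not be $J_{st}$-holomorphic. The main obstacle is exactly here: given a $\tilde J$-holomorphic disc $u : \Delta \to D$ realizing (nearly) the $\tilde J$-Kobayashi length of a curve, one must produce from it a competitor that is $J_{st}$-holomorphic, or vice versa, with comparable length. The device for this is Proposition~A: one composes with, or compares via, quasiconformal reparametrizations. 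Concretely, the ellipticity estimate for the $\bar\partial_{\tilde J}$ operator shows that a $\tilde J$-holomorphic disc $u$, after post-composition with a suitable chart, differs from a $J_{st}$-holomorphic map by a map whose complex dilatation is bounded by a multiple of $c$; solving the Beltrami equation with that dilatation yields a $k$-quasiconformal self-map $\varphi$ of $\Delta$, $k = O(c) < 1$, such that $u \circ \varphi$ (suitably interpreted) is $J_{st}$-holomorphic. Proposition~A then controls $d_\Delta(\varphi(\zeta), \varphi(\zeta'))$ by $C_k(d_\Delta(\zeta,\zeta') + 1)$, and the additive $+1$ is precisely the source of the additive constant $c$ in the quasi-isometry inequality.

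**Assembling the estimate.** With these ingredients I would argue as follows. Fix $z, w \in D$ and a (near-)geodesic $\gamma$ for $d_{(D, J_{st})}$ from $z$ to $w$; cover it by finitely many pseudoholomorphic discs, or more efficiently, connect $z$ and $w$ by a single extremal-type disc using strong convexity (Lempert-type theory guarantees, for strongly convex domains, that the Kobayashi distance is realized by complex geodesics, and this persists under small almost complex deformations by the work on the almost complex Lempert theory). On that disc, pass from the $J_{st}$-parametrization to a $\tilde J$-one by a quasiconformal change of variable as above, apply Proposition~A to estimate the distortion of the parameter, integrate, and obtain $d_{(D, \tilde J)}(z,w) \leq \lambda\, d_{(D, J_{st})}(z,w) + c'$ for $\lambda = \lambda(c) \to 1$ and $c' = c'(c) \to 0$. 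Running the same argument with the roles of $\tilde J$ and $J_{st}$ interchanged (using that $\tilde J - J_{st}$ small also means $J_{st} - \tilde J$ small, and that $F^{-1}$ satisfies the analogous hypothesis) gives the reverse inequality $\frac{1}{\lambda} d_{(D,J_{st})}(z,w) - c' \leq d_{(D,\tilde J)}(z,w)$. Combining with the exact isometry $d_{(D,\tilde J)}(z,w) = d_{(D',J_{st})}(F(z),F(w))$ yields the claimed quasi-isometry bounds. The step I expect to be most delicate is the uniform control, over all extremal discs through all pairs of points, of the quasiconformal dilatation in terms of $c$ and of the constant $C_k$ in Proposition~A; this requires the smoothness of $F$ up to $\overline D$ and the compactness of $\overline D$ to get estimates independent of the disc, and it is where one must be careful that the bound \eqref{eqqcF} is genuinely a uniform pointwise bound on $D$.
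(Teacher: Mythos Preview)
Your strategy coincides with the paper's: reduce via the tautological isometry $F:(D,\tilde J)\to(D',J_{st})$ to comparing $d_{(D,J_{st})}$ with $d_{(D,\tilde J)}$ on a single domain, invoke Lempert-type extremal discs for each structure, and feed a quasiconformal self-map of $\Delta$ into Proposition~A. The gap is in how you manufacture that self-map. You propose to solve a Beltrami equation so that, for a $\tilde J$-holomorphic disc $u:\Delta\to D\subset\mathbb C^n$, the reparametrization $u\circ\varphi$ becomes $J_{st}$-holomorphic. For $n>1$ this is overdetermined: writing the equation as $u_{\bar\zeta}=Q(u)\,\overline{u_\zeta}$, the $n$ components of $u$ have in general distinct complex dilatations, and the single Beltrami coefficient $\varphi_{\bar\zeta}/\overline{\varphi_\zeta}$ cannot kill them all simultaneously. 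Relatedly, the $J_{st}$- and $\tilde J$-extremal discs through a given pair of points are \emph{different} discs with different images, not reparametrizations of one another, so there is no ``change of variable'' linking them.

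The paper's device, which you do not mention, is Lempert's \emph{holomorphic retract}. Working on $D'$ with $J'=F_*J_{st}$, let $f$ be the $J_{st}$-extremal disc and $u$ the $J'$-extremal disc through $F(p),F(q)$, and let $r_f:D'\to\Delta$ be the holomorphic left inverse with $r_f\circ f=\mathrm{id}$. The map to which Proposition~A is applied is $r_f\circ u:\Delta\to\Delta$: its $\bar\partial$ is small because $r_f$ is $J_{st}$-holomorphic and $\bar\partial_{J_{st}}u$ is small (since $u$ is $J'$-holomorphic with $J'$ close to $J_{st}$), while $|\partial(r_f\circ u)|$ is close to $|\partial(r_f\circ f)|=1$ because $u$ is a $\mathcal C^1$-small deformation of $f$ by the implicit-function construction of $J'$-stationary discs. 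For the reverse inequality one uses the retract of $F^{-1}\circ u$, which is genuinely holomorphic in $D$. The retract is exactly what replaces your unavailable reparametrization: rather than forcing $u$ to be holomorphic, one projects it holomorphically to a one-dimensional target where quasiconformality makes sense. (A side remark: a pointwise multiplicative bound $K_{(D,J_{st})}\asymp K_{(D,\tilde J)}$ would already yield a bi-Lipschitz estimate, hence quasi-isometry with additive constant zero; the difficulty with that route is not that it is too weak but that establishing it is essentially the same problem you then set out to solve.)
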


We recall that a bounded domain $D \subset \mathbb C^n$, with boundary $\partial D$ of class $\mathcal C^2$, is strongly convex if all the normal curvatures of $\partial D$ are positive.

\begin{proof}
Observe first that the direct image of $J_{st}$ under $F$, denoted by  $J':=F_*{J_{st}}$, is a 
 small $\mathcal{C}^2$ perturbation of $J_{st}$ on $\overline{D'}$. Indeed the complexification  $J'_{\C}$ of the structure $J'$ 
 can be written as a $(2n \times 2n)$ complex matrix:
$$J'_{\C}(z)=\left(\begin{matrix}
A (z) & B(z)\\
\overline{B}(z) & \overline{A}(z) \end{matrix}\right)
$$ 
where 
\begin{eqnarray*}
A(z)& = & i\partial F(F^{-1}(z)) \partial F^{-1}(z)-i\overline{\partial} F(F^{-1}(z)) \partial \overline{F^{-1}}(z) \\
& = & i-2i\overline{\partial}F(F^{-1}(z))\partial{\overline{F^{-1}}}(z)
 \end{eqnarray*}
 and 
$$B(z)=i\partial F(F^{-1}(z))\overline{\partial} F^{-1}(z)-i\overline{\partial}F((F^{-1}(z)))\overline{\partial F^{-1}}(z).$$ 
It then follows from (\ref{eqqcF}) that $J'$ is  $\mathcal{C}^2$ deformation of $J_{st}$ on $\overline{D'}$.  

We want to prove that there exist  two positive constants $\lambda$ and $c$ such that for all  $p,q \in D$
\begin{equation*}
\frac{1}{\lambda}d_{(D,J_{st})}(p,q)-c \leq d_{(D',J_{st})}(F(p),F(q)) \leq \lambda d_{(D,J_{st})}(p,q)+c.
\end{equation*}
Since $F$ is an isometry from $(D,d_{(D,J_{st})})$ to $(D',d_{(D',J')})$ it is equivalent to prove that $(D',d_{(D',J_{st})})$ and $(D',d_{(D',J')})$
are quasi-isometric metric spaces, namely that
 \begin{equation}\label{eqqisec4}
\frac{1}{\lambda}d_{(D',J')}(F(p),F(q))-c \leq d_{(D',J_{st})}(F(p),F(q)) \leq \lambda d_{(D',J')}(F(p),F(q))+c.
\end{equation}

Let $p,q \in D$ with $p\neq q$ and consider  the extremal holomorphic disc $f: \Delta \rightarrow D'$ 
passing through $F(p)$ and $F(q)$. According to   Theorem 5.3 in \cite{ga-jo}, for every $z \in D'$ and $v \in \mathbb C^n\backslash \{0\}$ there is a unique $J'$-stationary disc $u: \Delta \rightarrow D$ such that $u(0)=z$ and $du(0)(\partial / \partial x) =\lambda v$ for some $\lambda > 0$. Here we denote by $(x,y)$ the real coordinates in $\mathbb C$.
It follows from Theorem 6.4 in \cite{ga-jo} that $u$ is a local extremal disc, meaning that there is a neighborhood $\mathcal U$ of $u(\overline{\Delta})$ such that $u$ is extremal among all disc $\tilde u: \Delta \rightarrow \mathcal U$ such that $\tilde u(0) = z$ and $d\tilde u(0)(\partial / \partial x) \in \mathbb R^+v$. Hence the disc  $F^{-1}\circ u$ is locally extremal in $D$. It follows from Lempert's theory (\cite{lem}) that $F^{-1}\circ u$ is extremal. Therefore $u$ is an extremal disc. Since $D$ is foliated by holomorphic extremal discs centered at $F^{-1}(z)$, the foliation being singular at $F^{-1}(z)$, we obtain a singular foliation of $D'$ by the $J'$-holomorphic extremal discs constructed in \cite{ga-jo}.
Consequently consider the unique $J'$-holomorphic extremal disc $u$ passing through 
$F(p)$ and $F(q)$.
Since $J'$ is a small $\mathcal{C}^2$ perturbation of 
$J_{st}$, $u$ is a small $\mathcal{C}^1$ deformation of $f$ due to the proof of Theorem 5.3 in \cite{ga-jo} which is based on 
the Implicit Function Theorem.
 According to  \cite{lem2} there exists a holomorphic retract $r_f: D' \rightarrow \Delta$ such that $r_f \circ f = id$.  Let 
 $\zeta,\zeta',\eta,\eta' \in \Delta$ be 
such that $F(p)=u(\zeta)=f(\eta)$ and $F(q)=u(\zeta')=f(\eta')$. 

Although the composition of a holomorphic function and a $J'$-holomorphic disc is not, in general, quasiconformal, the 
map $r_f \circ u$ is  $k$-quasiconformal with $k< 1$; indeed, it has a small $\frac{\partial}{\partial \overline{\zeta}}$  
derivative 
and, since $u$ is a small $\mathcal{C}^1$ deformation of $f$,  $\left|\frac{\partial r_f \circ u}{\partial \zeta}\right|$ is close to 
$1$. 
It follows from  Proposition A that
$$d_{\Delta}(r_f \circ f(\eta),r_f \circ f(\eta'))=d_{\Delta}(r_f \circ u(\zeta),r_f \circ u(\zeta'))\leq \lambda d_{\Delta}(\zeta,\zeta') + 
c,$$
which, by the extremal properties of $f$ and $u$ gives the right hand side of (\ref{eqqisec4}). 

Moreover the disc $F^{-1}\circ u$ is holomorphic (in the standard sense) and extremal for the pair of points $(p,q)$ in $D$. Denote by 
$r: D\rightarrow \Delta$ its holomorphic retract and set $r_u:=r\circ F^{-1}$. The function $r_u$ satisfies $r_u \circ u = id$ and 
 $r_u \circ f$ is  also $k$-quasiconformal with $k<1$. This provides the left hand side of (\ref{eqqisec4}) and ends the proof of Theorem~\ref{quasi-thm}.
\end{proof}

\vskip 0,2cm

We end this note with a remark concerning morphisms of Gromov hyperbolic spaces in relation with holomorphic maps. It is natural to try to enlarge the class of mappings studied in this Section.
In particular we could consider diffeomorphisms, between strongly pseudoconvex domains, satisfying inequalities such as $|\bar \partial _J f| \leq C |df|$ where $J$ is the almost complex ambient structure.
New problems arise in that situation that will lead to a more specific study. For instance, as mentioned by J.-P.Rosay in \cite{ros}, it is not known if such a map has isolated zeroes if it is not identically zero, in complex dimension more than one.

\vskip 0,5cm
{\small
\noindent Florian Bertrand\\
Department of Mathematics, University of Vienna\\
Nordbergstrasse 15, Vienna, 1090, Austria\\
{\sl E-mail address}: florian.bertrand@univie.ac.at\\
\\
Herv\'e Gaussier\\
(1) UJF-Grenoble 1, Institut Fourier, Grenoble, F-38402, France\\
(2) CNRS UMR5582, Institut Fourier, Grenoble, F-38041, France\\
{\sl E-mail address}: herve.gaussier@ujf-grenoble.fr\\
} 
 
\end{document}